\documentclass[12pt]{amsart}

\usepackage{amsmath}
\usepackage{amsfonts}
\usepackage{amssymb}
\usepackage{amsthm}
\usepackage{hyperref}
\usepackage{tikz}
\usepackage{enumerate}
\usepackage{cleveref}
\usepackage{mathrsfs}
\usepackage[top=0.9in, bottom=1.15in, left=1.1in, right=1.1in]{geometry}
 \usepackage{hyperref}
\hypersetup{colorlinks=true,linkcolor=blue,citecolor=magenta}
\newtheorem{theorem}{Theorem}
\newtheorem{lemma}[theorem]{Lemma}

\newtheorem{corollary}[theorem]{Corollary}

\newtheorem*{remark}{Remark}

\Crefname{conjecture}{Conjecture}{Conjectures}

\theoremstyle{plain}

\theoremstyle{plain}




%
\author{Matthew Just and Robert Schneider}

\address{Department of Mathematics\newline
University of Georgia\newline
Athens, Georgia 30602, U.S.A.}
\email{justmatt@uga.edu}

\address{Department of Mathematics\newline
University of Georgia\newline
Athens, Georgia 30602, U.S.A.}
\email{robert.schneider@uga.edu}

\title{Partition Eisenstein series and semi-modular forms}

\begin{document}

\begin{abstract}
We identify a class of ``semi-modular'' forms invariant on special subgroups of $GL_2(\mathbb Z)$, which includes classical modular forms together with complementary classes of functions that are also nice in a specific sense. We define an Eisenstein-like series summed over integer partitions, and use it to construct families of semi-modular forms.
\end{abstract} 

\maketitle

\section{Introduction and statement of results}
\subsection{Semi-modular forms}
In a landmark 2000 paper \cite{B-O}, Bloch and Okounkov introduced an operator from statistical physics, the $q$-bracket, under the action of which 
certain partition-theoretic $q$-series 
transform to quasi-modular forms, a class of functions that includes classical modular forms. This work was expanded on by Zagier \cite{Zagier} and subsequent authors, e.g. \cite{Padic, Schneider_arithmetic, Schneider_JTP, JWVI}. In recent work \cite{BOW}, Bringmann-Ono-Wagner produce examples of modular forms, quantum modular forms, and harmonic Maass forms via relations to classical Eisenstein series and properties of $t$-hooks from the theory of integer partitions, again by applying the $q$-bracket.

These works display an intriguing theme: patterns and symmetries within the set $\mathcal P$ of partitions, give rise to modularity properties. 
In this paper, we apply similar ideas to answer a theoretical question we pose regarding the existence of classes of special functions: we construct a class of Eisenstein series summed over partitions --- and dependent on symmetries within the set $\mathcal P$ --- to produce first examples of what we call ``semi-modular forms'', which in short are functions of a complex variable enjoying one of the two canonical invariances of modular forms, as well as a strong complementary invariance property.

Let us recall the canonical generators of the {\it general linear group} $GL_{2}(\mathbb Z)$, viz.
\begin{equation*}
T=\begin{pmatrix}
1 & 1 \\
0 & 1
\end{pmatrix},
\  \  \  \
U=\begin{pmatrix}
0 & 1 \\
1 & 0
\end{pmatrix},
\  \  \  \
V=\begin{pmatrix}
1 & 0 \\
0 & -1
\end{pmatrix}.
\end{equation*}
An important subgroup of $GL_{2}(\mathbb Z)$ is the {\it modular group} $PSL_{2}(\mathbb Z)$, which is well known to be generated by the ``translation'' matrix $T$ together with the ``inversion'' matrix
\begin{equation*}
S=\begin{pmatrix}
0 & -1 \\
1 & 0
\end{pmatrix}
\in GL_{2}(\mathbb Z).\end{equation*}
Functions on $z \in \mathbb H$ (upper half-plane) invariant under $\left<S,T\right>=PSL_{2}(\mathbb Z)$ up to a simple multiplier in $z$ are {\it modular forms}, a class of functions central to modern number theory. 

Of course, the canonical generators $T,U,V$ for  $GL_2(\mathbb Z)$ are not unique. Then given that the matrices $S,T$ induce modular forms, a natural question to ask is: can one find examples of a ``nice''
complementary matrix $R\in GL_2(\mathbb Z)$ such that
\begin{equation*}
GL_2(\mathbb Z)=\left<R, S, T\right>,
\end{equation*}
that come with ``nice'' complementary functions invariant on $\left<R, S\right>$ and/or $\left<R, T\right>$, as well as modular forms invariant on $\left<S,T\right>$?
Such families of functions would be, in a sense, ``half-modular'' or {\it semi-modular} as we will denote them, in that they are invariant with respect to two of the three generators $R,S,T$ of $GL_2(\mathbb Z)$, including at least
one of the generators of the modular group.
Take for instance the following matrix $R\in GL_2(\mathbb Z)$:
\begin{equation}
R:=\begin{pmatrix}
-1 & 0 \\
0 & 1
\end{pmatrix}.
\end{equation}
Even functions are the forms invariant under $R$, a very nice class of functions. In recent work \cite{DuncanMcGady}, the matrix $R$ is found by Duncan-McGady to play a natural role in their theory of half-integral weight modular forms defined on the double half-plane $\mathbb C \backslash \mathbb R$. 

Indeed, observing that one can rewrite the generators $U,V$ of $GL_2(\mathbb Z)$ as
\begin{equation*}
U=RS,\  \  \  \  V=RS^2,
\end{equation*}
%
%
%
 then one can alternatively view 
$GL_2(\mathbb Z)=\left< R,S,T\right>$, as desired.
Note that the subgroup $\left<R,T\right>$ induces invariance in even periodic functions of period 1, e.g. the function
\begin{equation*}
f(z)=\cos(2\pi z),\  \  \  \  z\in \mathbb C,
\end{equation*}
is in this class. 
From this perspective, modular forms invariant on $\left<S,T\right>$ and even periodic functions of period 1 invariant on $\left<R,T\right>$ are members of the larger class of {semi-modular forms} defined above, functions invariant on two out of three of the generators $R,S,T$.\footnote{While modular forms are also semi-modular, we note that the identification of a third, complementary generator of $GL_2(\mathbb Z)$ such as $R$ is implicit in our definition of semi-modularity.} 

\subsection{Partition-theoretic Eisenstein series}
Giving further consideration to the generators $R,S,T$ of $GL_2(\mathbb Z)$, one wonders about the other, more obscure-feeling sibling class of functions invariant on $\left<R,S\right>$ that should complete the family  $\{\left<R,S\right>,\left<R,T\right>,\left<S,T\right> \}$ of ``semi-modular forms'', complementing modular forms and even periodic functions of period 1. 
This type of semi-modular family does not appear to be widely studied in the literature; we seek explicit examples. Here we construct semi-modular forms invariant under $\left<R,S\right>$ to complete the family, by fusing classical Eisenstein series with ideas from partition theory.

Recall for $k>2, \tau\in \mathbb H$, the weight $k$ {\it Eisenstein series} (a double summation) which is the prototype of an integer-weight holomorphic modular form invariant on $PSL_2(\mathbb Z)$:
\begin{equation*}
G_{k}(\tau)=\sum_{\substack{a,b\in\mathbb Z \\ (a,b)\neq (0,0)}} (a\tau+b)^{-k}.
\end{equation*}
If the weight $k>2$ is odd, then 
$G_k(\tau)=0$. Taking $k \mapsto 2k$, then for $k>1$, the function $G_{2k}:\mathbb H \to \mathbb C$ satisfies the defining properties of a weight $2k$ modular form: 
\begin{enumerate}[(i)]
\item $G_{2k}(-\frac{1}{\tau})=\tau^{2k}G_{2k}(\tau)$\   (weighted invariance under inversion matrix $S$),
\item $G_{2k}(\tau+1)=G_{2k}(\tau)$ \  (invariance under translation matrix $T$).
\end{enumerate}

We mimic this classical function using ideas from partition theory, to construct semi-modular partition-theoretic Eisenstein series invariant on $\left<R,S\right>$. Now, for $N\geq 1$ one immediately sees that the truncated Eisenstein series
\begin{equation}\label{truncated}
\sum_{\substack{|a|\leq N,|b|\leq N  \\ (a,b)\neq (0,0)}} (az+b)^{-2k}
\end{equation}
still respects (i) above, and is an even function of $z\not\in \mathbb R$, 
an observation expanded on significantly in \cite{DuncanMcGady}. 
Thus the double sum \eqref{truncated} is semi-modular with respect to $\left< R,S\right>$
; as we remark explicitly below, this {\it ad hoc} example has a 
partition-theoretic interpretation. 

Recall $\mathcal P$ is the set of integer partitions, including the empty partition $\emptyset$. As we detail explicitly in Section \ref{Sect2} below, we define a {\it Ferrers-Young lattice} in the complex plane, 
which is a four-fold symmetric version of the classical Ferrers-Young diagram\footnote{Ferrers diagrams use dots to illustrate partitions; Young diagrams use unit squares in identical arrangements.} of an integer partition $\lambda\in \mathcal P$, with the vertices (dots) of the Ferrers diagram 
plotted according to a natural rule (see Section \ref{Sect2}) in the lattice $\left<z,1\right> \subset \mathbb C\backslash \mathbb R$ generated by the fundamental pair $z$ and $1$, for fixed $z\in \mathbb C, z\not\in \mathbb R$. 

Summing over points $\omega=az+b \in \mathscr F(\lambda, z)$ consisting of vertices of the Ferrers-Young lattice for partition $\lambda$, for $k\in \mathbb Z$ we define an auxiliary {\it single-partition Eisenstein series} 
\begin{equation}\label{Eisenstein2}
f_{k}(\lambda, z)\  :=\  \frac{1}{4}\sum_{\omega \in \mathscr F(\lambda,z)} \omega^{-k}\  =\  \frac{1}{4}\sum_{\  az+b\in  \mathscr F(\lambda,z)}(az+b)^{-k},
\end{equation}
with $f_k(\emptyset, z):=0$. (The factor ${1}/{4}$ compensates for the four-fold symmetry of $\mathscr{F}(\lambda,z)$.) As we show in Section \ref{proofs}, if $k=0$ then $f_0(\lambda, z)=|\lambda|$, the {\it size} (sum of parts) of partition $\lambda$; thus $f_k(\lambda, z)$ generalizes $|\lambda|$. 

\begin{remark}
In this combinatorial setting, the truncated Eisenstein series \eqref{truncated} represents \begin{flalign*}\sum_{\substack{|a|\leq N,|b|\leq N  \\ (a,b)\neq (0,0)}} (az+b)^{-2k}\  =\  4\cdot f_{2k}\left((N)^{N}, z\right)+(2+z^{2k}+z^{-2k})\sum_{1\leq m\leq N}m^{-2k}\end{flalign*} for $z\not\in \mathbb R$, where $(N)^{N}\in \mathcal P$ is the partition of size $N^2$ consisting of $N$ copies of $N$; thus
\begin{flalign*}  f_{2k}\left((N)^{N}, z\right)\  \sim\  \frac{1}{4}\left[G_{2k}(z)-(2+z^{2k}+z^{-2k})\zeta(2k)\right]\end{flalign*} 
as $N\to\infty$ for $k>1$. We note the factor $2+z^{2k}+z^{-2k}$ is also invariant on $\left<R,S\right>$.
\end{remark}

Then analytic properties of $f_k(\lambda, z)$ yield semi-modular behavior of the following finite {\it partition Eisenstein series} summed over partitions of $n\geq 1$ for $z\not\in \mathbb R$: 
\begin{equation}\label{partEisenstein}
g_{k}(n, z):=\sum_{\lambda \vdash n} f_k(\lambda, z),
\end{equation}
where ``$\lambda \vdash n$''  means $\lambda$ is a partition of $n$, with $g_k(0,z):=0$. We note $g_0(n, z)=n\cdot p(n)$. 

\begin{theorem}\label{Thm1}
Let $k\in \mathbb Z, n\geq 0, z\not\in \mathbb R$. If $k$ is an odd integer then $g_k(n,z)=0.$ For even weights, $g_{2k}(n,z)$ satisfies the following properties: 
\begin{enumerate}[(i)]
\item $g_{2k}(n,-\frac{1}{z})=z^{2k}g_{2k}(n,z)$\   (weighted invariance under $S$),
\item $g_{2k}(n,-z)=g_{2k}(n,z)$ \  (invariance under $R$).
\end{enumerate}
\end{theorem}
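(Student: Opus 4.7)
The plan is to deduce all three conclusions from two structural facts about the Ferrers-Young lattice: its four-fold symmetry (closure of the coefficient set under $(a,b)\mapsto (\pm a,\pm b)$), and the geometric fact that swapping the two coordinates sends the lattice of $\lambda$ to the lattice of the conjugate partition $\lambda'$. The odd-weight vanishing and $R$-invariance fall out at the level of individual partitions; only $S$-invariance will require summing over all $\lambda\vdash n$, where the involution $\lambda\mapsto\lambda'$ on $\{\lambda\vdash n\}$ does the real work.

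For the odd-weight statement and part (ii), I would first observe that because $\mathscr{F}(\lambda,z)$ is closed under $\omega\mapsto -\omega$, the terms in \eqref{Eisenstein2} pair up as $\omega^{-k}+(-\omega)^{-k}=(1+(-1)^{-k})\omega^{-k}$, which vanishes when $k$ is odd. Hence $f_k(\lambda,z)=0$ for odd $k$ and every $\lambda$, and summing over $\lambda\vdash n$ gives $g_k(n,z)=0$. For $R$-invariance, the four-fold symmetry in its weaker form $(a,b)\mapsto(-a,b)$ gives a set-theoretic equality $\mathscr{F}(\lambda,-z)=\mathscr{F}(\lambda,z)$, since the term $a(-z)+b=(-a)z+b$ is already present in $\mathscr{F}(\lambda,z)$. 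Thus $f_k(\lambda,-z)=f_k(\lambda,z)$ termwise, and summing yields (ii).

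The heart of the proof is (i). Substituting $z\mapsto -1/z$ into \eqref{Eisenstein2} and clearing denominators gives
\[
f_{2k}(\lambda,-1/z)\;=\;\frac{1}{4}\sum_{(a,b)}\!\left(\frac{bz-a}{z}\right)^{\!-2k}\;=\;\frac{z^{2k}}{4}\sum_{(a,b)}(bz-a)^{-2k},
\]
where the sum is indexed by the coefficient pairs appearing in $\mathscr{F}(\lambda,z)$. Using the $a\mapsto -a$ symmetry I can replace $(bz-a)^{-2k}$ by $(bz+a)^{-2k}$; relabeling the summation variables by the coordinate swap $(a,b)\leftrightarrow(b,a)$ then rewrites this as $\sum(az+b)^{-2k}$ indexed by the coefficient pairs of $\mathscr{F}(\lambda',z)$, because transposition of the underlying Ferrers-Young diagram is by construction the combinatorial realization of conjugation. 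This yields the single-partition identity
\[
f_{2k}(\lambda,-1/z)\;=\;z^{2k}\,f_{2k}(\lambda',z).
\]
Summing over $\lambda\vdash n$ and reindexing by the bijection $\lambda\mapsto\lambda'$ on partitions of $n$ gives $g_{2k}(n,-1/z)=z^{2k}g_{2k}(n,z)$, which is (i).

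The main obstacle I expect is purely bookkeeping around the construction in Section \ref{Sect2}: I must verify carefully that the coefficient set of $\mathscr{F}(\lambda,z)$ is closed under both $(a,b)\mapsto(-a,b)$ and $(a,b)\mapsto(-a,-b)$, and, more importantly, that swapping the two coordinates identifies the coefficient set of $\mathscr{F}(\lambda,z)$ with that of $\mathscr{F}(\lambda',z)$. Both are geometrically natural for a symmetrized Ferrers-Young arrangement, but they are the precise combinatorial inputs that turn the formal manipulation $z\mapsto -1/z$ into genuine semi-modularity after the partition sum; the rest of the argument is then essentially forced.
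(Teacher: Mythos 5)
Your proposal is correct and follows essentially the same route as the paper: the odd-weight vanishing and $R$-invariance come from the sign symmetries of the coefficient set of $\mathscr{F}(\lambda,z)$, and the $S$-transformation comes from factoring out $z^{2k}$ and recognizing the coordinate swap $(a,b)\leftrightarrow(b,a)$ as conjugation, followed by reindexing the sum over $\lambda\vdash n$ by the involution $\lambda\mapsto\overline{\lambda}$. The paper packages the single-partition identity $z^{2k}f_{2k}(\lambda,z)=f_{2k}(\overline{\lambda},-1/z)$ as a lemma (proved via the same four-quadrant decomposition and variable swap you describe), so your argument matches it step for step.
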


By Theorem \ref{Thm1}, then,  $g_{k}(n,z)$ is semi-modular in $z$ over $\left< R, S\right>$; it will serve as a building block for this class of semi-modular forms.
In the next section we see that the identity (i) above has the following combinatorial interpretation: {\it taking $z \mapsto -1/z$ produces partition conjugation in certain ``dual'' Ferrers-Young lattices}.


Let us now consider the following two-variable generating function: 
\begin{flalign}\label{partEisenstein2}
\mathscr{G}_{k}(z)\  =\  \mathscr{G}_{k}(z,q)\  :&=  \sum_{n\geq 1} g_k(n, z)q^n\\ \nonumber &=  \sum_{\lambda \in \mathcal P}f_k(\lambda, z)q^{|\lambda|},
\end{flalign}
valid for $z\not\in \mathbb R, |q|<1$. Then $\mathscr{G}_{k}(z)$ inherits semi-modularity in $z$ from its coefficients.

\begin{corollary}\label{Cor1}
Let $k\in \mathbb Z, z\not\in \mathbb R$. If $k$ is an odd integer then $\mathscr{G}_k(z)=0.$ For even weights, $\mathscr{G}_{2k}(z)$ satisfies the following properties: 
\begin{enumerate}[(i)]
\item $\mathscr{G}_{2k}(-\frac{1}{z})=z^{2k}\mathscr{G}_{2k}(z)$\   (weighted invariance under $S$),
\item $\mathscr{G}_{2k}(-z)=\mathscr{G}_{2k}(z)$ \  (invariance under $R$).
\end{enumerate}
\end{corollary}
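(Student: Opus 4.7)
The plan is to deduce each claim of Corollary \ref{Cor1} by applying Theorem \ref{Thm1} termwise to the coefficients of the power series $\mathscr{G}_k(z,q) = \sum_{n \geq 1} g_k(n,z)\, q^n$. Since $z$ is not being summed over in forming $\mathscr{G}_k$, the transformations $z \mapsto -1/z$ and $z \mapsto -z$ act on the coefficients one at a time, and the scalar factor $z^{2k}$ in (i) is independent of $n$ and so can be pulled outside the $q$-sum.

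First I would verify absolute convergence of the defining series for fixed $z \notin \mathbb{R}$ and $|q| < 1$, so that termwise manipulations are legitimate. Each $f_k(\lambda,z)$ is a finite sum over the vertices of $\mathscr{F}(\lambda,z)$, and a crude bound of the form $|g_k(n,z)| = O\!\left(n^{|k|+1}\, p(n)\right)$ (with an implied constant depending on $z$ and $k$) combined with the Hardy--Ramanujan growth estimate $p(n) = O(e^{c\sqrt{n}})$ gives absolute convergence of $\sum_{n \geq 1} g_k(n,z)\, q^n$ for $|q|<1$. This convergence is uniform on compact subsets of the domain, so rearrangement and the swap $\sum_n \sum_{\lambda \vdash n} = \sum_{\lambda \in \mathcal{P}}$ used in the second line of \eqref{partEisenstein2} are justified.

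With convergence in hand, the three claims of Corollary \ref{Cor1} reduce to Theorem \ref{Thm1} applied coefficient by coefficient. If $k$ is odd, Theorem \ref{Thm1} gives $g_k(n,z) = 0$ for every $n \geq 1$, so every coefficient of $\mathscr{G}_k(z,q)$ vanishes. For claim (i), substituting $z \mapsto -1/z$ and invoking Theorem \ref{Thm1}(i) coefficientwise gives
\[
\mathscr{G}_{2k}(-1/z, q) = \sum_{n \geq 1} g_{2k}(n,-1/z)\, q^n = z^{2k} \sum_{n \geq 1} g_{2k}(n,z)\, q^n = z^{2k}\, \mathscr{G}_{2k}(z,q).
\]
Claim (ii) is the identical argument, using the $R$-invariance $g_{2k}(n,-z) = g_{2k}(n,z)$ in place of the $S$-transformation.

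The main (and essentially only) obstacle is the convergence verification; the semi-modular content of the corollary is inherited directly from Theorem \ref{Thm1}, so no genuinely new difficulty arises beyond what has already been established for the individual partition Eisenstein series.
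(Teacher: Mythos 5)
Your proposal is correct and follows essentially the same route as the paper: bound $|g_k(n,z)|$ by a polynomial in $n$ times $p(n)$ (with constants depending on $z$) to justify convergence of the $q$-series for $|q|<1$, then transfer the vanishing, the $R$-invariance, and the $S$-transformation from Theorem \ref{Thm1} coefficient by coefficient, pulling the $n$-independent factor $z^{2k}$ outside the sum. No substantive difference from the paper's argument.
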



Again, the transformation $z\mapsto -1/z$ appears in connection with partition conjugation. As an infinite series, $\mathscr{G}_k(z)=\mathscr{G}_k(z,q)$ appears as something of a close cousin to functions in the realm of modular forms. For instance, the simplest case $k=0$ gives
\begin{equation*}
\mathscr{G}_{0}(z)\  =\  \sum_{n\geq 1} n\cdot p(n)q^n
\  =\  \prod_{m\geq 1}(1-q^m)^{-1}\cdot\sum_{n\geq 1}\sigma_1(n)q^n, \end{equation*}
which connects to well-known identities writing $G_{2k}(\tau)$ in terms of $\sum_{n\geq 1}\sigma_{2k-1}(n)q^n$ when $q:=e^{2\pi i \tau},\tau\in \mathbb H$ (see e.g. \cite{Apostol}), linking 
$ \mathscr{G}_{0}(z)=\mathscr{G}_{0}(z,q)$ to the quasi-modular form $G_2(\tau)$.

\begin{remark}
One wonders more generally if $\prod_{m\geq 1}(1-q^m)\cdot \mathscr{G}_{2k}(z,q)$ is worthy of deeper study, noting by \eqref{partEisenstein} it represents the $q$-bracket of $f_{2k}(\lambda,z)$ in the sense of Bloch-Okounkov.
\end{remark}



\section{Proofs of Theorem \ref{Thm1} and Corollary \ref{Cor1}}\label{Sect2whole}

\subsection{Ferrers-Young diagrams in the complex plane}\label{Sect2}

Let $\lambda=(\lambda_1,\lambda_2,\ldots,\lambda_r), \lambda_1 \geq \lambda_2 \geq \dots \lambda_r\geq 1$, denote a nonempty partition, with $\emptyset \in \mathcal P$ denoting the empty partition; we call the number of parts $r\geq 0$ the {\it length} of the partition. We write $\lambda \vdash n$ to indicate that $\lambda$ is a partition of $n$, noting that $\emptyset\vdash 0$. We recall the classical {\it Ferrers-Young diagram} of a partition as well as {\it partition conjugation} (swapping rows and columns of the Ferrers-Young diagram); see e.g. \cite{Andrews}. For instance, the partition $\lambda=(3,2,2,1)$ and its conjugate $\overline{\lambda}=(4,3,1)$ have the following Ferrers-Young diagrams, respectively:

\vspace{0.5cm}
\hfill \begin{tikzpicture}[inner sep=0pt,thick,
    dot/.style={fill=black,circle,minimum size=4.5pt}]
\node[dot] (a) at (0,0) {};
\node[dot] (a) at (1,1) {};
\node[dot] (a) at (0,1) {};
\node[dot] (a) at (0,2) {};
\node[dot] (a) at (1,2) {};
\node[dot] (a) at (0,3) {};
\node[dot] (a) at (1,3) {};
\node[dot] (a) at (2,3) {};
\end{tikzpicture}
\hfill \hfill \begin{tikzpicture}[inner sep=0pt,thick,
    dot/.style={fill=black,circle,minimum size=4.5pt}]
\node[dot] (a) at (0,2) {};
\node[dot] (a) at (0,3) {};
\node[dot] (a) at (1,3) {};
\node[dot] (a) at (0,4) {};
\node[dot] (a) at (1,4) {};
\node[dot] (a) at (2,4) {};
\node[dot] (a) at (3,4) {};
\node[dot] (a) at (2,3) {};
\end{tikzpicture}\hfill
 \vspace{0.5cm}

 For any point $z=x+iy \in \mathbb{C}\backslash \mathbb R$ 
 we define a four-fold {\it Ferrers-Young lattice} representing partition $\lambda$, to be the set of points
 \begin{equation}\mathscr F(\lambda,z): = \{  az +b : 1\leq |b| \leq r \text{ and } 1\leq |a| \leq \lambda_{|b|} \},\end{equation}
 where $r$ is the length of $\lambda$. In words, plot the Ferrers-Young diagram on the vertices in the corner of the first quadrant of the lattice $\left<z,1\right>$ such that the top edge of the diagram is parallel to the line $tz, \  t\in \mathbb R$, the left-hand edge is parallel to the real number line, and neither edge falls on its respective border; likewise, plot the diagram into the other three quadrants with top and left edges still parallel to the line $tz$ and real axis, respectively.

For an illustration, set $z=1+i$ and take $\lambda = (3,2,2,1)$ as above. Then the Ferrers-Young lattices $\mathscr F(\lambda,z)$ (left) and $\mathscr F(\overline{\lambda},z)$ (right) are both illustrated, respectively, here: 

 \vspace{0.5cm}

    \hfill \begin{tikzpicture}[scale=.3]
            \draw[<->] (-7,0)--(7,0)node[anchor=west]{$x$};
            \draw[<->] (0,-7)--(0,7)node[anchor=south]{$iy$};
            \draw[dashed] (-5,-5)--(5,5)node[anchor=west]{$tz$};
            \draw[fill=black] ({1+1},1) circle (.1);
            \draw[fill=black] ({1+2},2) circle (.1);
            \draw[fill=black] ({1+3},3) circle (.1);
            \draw[fill=black] ({2+1},1) circle (.1);
            \draw[fill=black] ({2-1},-1) circle (.1);
            \draw[fill=black] ({1-1},-1) circle (.1);
            \draw[fill=black] ({1-2},-2) circle (.1);
            \draw[fill=black] ({1-3},-3) circle (.1);
            \draw[fill=black] ({-2+1},1) circle (.1);
            \draw[fill=black] ({-1+1},1) circle (.1);
            \draw[fill=black] ({-1+2},2) circle (.1);
            \draw[fill=black] ({-1+3},3) circle (.1);
            \draw[fill=black] ({-2-1},-1) circle (.1);
            \draw[fill=black] ({-1-1},-1) circle (.1);
            \draw[fill=black] ({-1-2},-2) circle (.1);
            \draw[fill=black] ({-1-3},-3) circle (.1);

            \draw[fill=black] (4,1) circle (.1);
            \draw[fill=black] (4,2) circle (.1);
            \draw[fill=black] (5,2) circle (.1);
            \draw[fill=black] (5,1) circle (.1);

            \draw[fill=black] (-2,1) circle (.1);
            \draw[fill=black] (0,2) circle (.1);
            \draw[fill=black] (-1,2) circle (.1);
            \draw[fill=black] (-3,1) circle (.1);

            \draw[fill=black] (-4,-1) circle (.1);
            \draw[fill=black] (-4,-2) circle (.1);
            \draw[fill=black] (-5,-2) circle (.1);
            \draw[fill=black] (-5,-1) circle (.1);

            \draw[fill=black] (2,-1) circle (.1);
            \draw[fill=black] (0,-2) circle (.1);
            \draw[fill=black] (1,-2) circle (.1);
            \draw[fill=black] (3,-1) circle (.1);
            \begin{scope}[xshift=10in]
            \draw[<->] (-7,0)--(7,0)node[anchor=west]{$x$};
            \draw[<->] (0,-7)--(0,7)node[anchor=south]{$iy$};
            \draw[dashed] (-5,-5)--(5,5)node[anchor=west]{$tz$};
            \draw[fill=black] ({1+1},1) circle (.1);
            \draw[fill=black] ({1+2},2) circle (.1);
            \draw[fill=black] ({2+1},1) circle (.1);
            \draw[fill=black] ({3+1},1) circle (.1);
            \draw[fill=black] ({1-1},-1) circle (.1);
            \draw[fill=black] ({1-2},-2) circle (.1);
            \draw[fill=black] ({2-1},-1) circle (.1);
            \draw[fill=black] ({3-1},-1) circle (.1);
            \draw[fill=black] ({-1+1},1) circle (.1);
            \draw[fill=black] ({-1+2},2) circle (.1);
            \draw[fill=black] ({-2+1},1) circle (.1);
            \draw[fill=black] ({-3+1},1) circle (.1);
            \draw[fill=black] ({-1-1},-1) circle (.1);
            \draw[fill=black] ({-1-2},-2) circle (.1);
            \draw[fill=black] ({-2-1},-1) circle (.1);
            \draw[fill=black] ({-3-1},-1) circle (.1);

            \draw[fill=black] (4,3) circle (.1);
            \draw[fill=black] (5,3) circle (.1);
            \draw[fill=black] (4,2) circle (.1);
            \draw[fill=black] (5,4) circle (.1);

            \draw[fill=black] (2,3) circle (.1);
            \draw[fill=black] (3,4) circle (.1);
            \draw[fill=black] (0,2) circle (.1);
            \draw[fill=black] (1,3) circle (.1);

            \draw[fill=black] (-2,-3) circle (.1);
            \draw[fill=black] (-1,-3) circle (.1);
            \draw[fill=black] (0,-2) circle (.1);
            \draw[fill=black] (-3,-4) circle (.1);

            \draw[fill=black] (-4,-3) circle (.1);
            \draw[fill=black] (-5,-4) circle (.1);
            \draw[fill=black] (-4,-2) circle (.1);
            \draw[fill=black] (-5,-3) circle (.1);
            \end{scope}
        \end{tikzpicture}\hfill\hfill
 \vspace{0.5cm}

The single-partition Eisenstein series $f_k(\lambda,z)$ defined in \eqref{Eisenstein2} is the sum over the vertices $\omega$ of the lattice $\mathscr F(\lambda,z)$. We note that conjugation looks like ``skewed'' rotation within the lattice. Nice properties of $f_k(\lambda,z)$ resulting from symmetries between conjugate partitions together with easy complex-analytic properties of Ferrers-Young lattices, will yield the semi-modularity of $g_{k}(n,z)$ and $\mathscr G_{k}(z)$ in $z\in \mathbb C\backslash \mathbb R$.

\subsection{Proofs}\label{proofs}
The main results follow almost immediately from the following lemma.
\begin{lemma}\label{Lemma}
 For $k \in \mathbb Z, z \in \mathbb C\backslash\mathbb R$, the function $f_{k}(\lambda,z)$ has the following properties:
    \begin{enumerate}[(i)]
        \item If $k=0$ then $f_{k}(\lambda,z)=|\lambda|$ (thus $f_k$ represents a generalization of partition size).
        \item If $k$ is odd then $f_k(\lambda,z )=0$.
        \item If $k$ is even then $f_k(\lambda,z)$ is an even function of $z$.
        \item Moreover, we have \[z^{2k}f_{2k}(\lambda,z)=f_{2k}\left(\overline{\lambda},-\frac{1}{z}\right).\]
    \end{enumerate}
\end{lemma}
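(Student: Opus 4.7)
The plan is to index each Ferrers-Young lattice by signed integer pairs and exploit three natural symmetries: independent negation of the two coordinates (which explains the factor of $1/4$ and yields (i)--(iii)), together with a transposition bijection between the index sets of $\lambda$ and $\overline{\lambda}$ (which drives (iv)). Writing
\[\mathscr F(\lambda,z) = \{ az+b : (a,b) \in I(\lambda)\}, \quad I(\lambda) := \{(a,b) \in \Z^2 : 1 \leq |b| \leq r,\ 1 \leq |a| \leq \lambda_{|b|}\},\]
where $r$ is the length of $\lambda$, turns each claim into a matter of relabeling the summation.

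For (i), $\omega^0=1$, so $f_0(\lambda,z) = \tfrac{1}{4}|I(\lambda)| = \sum_{b=1}^r \lambda_b = |\lambda|$. For (ii), the involution $(a,b) \mapsto (-a,-b)$ on $I(\lambda)$ sends $az+b$ to $-(az+b)$, so for odd $k$ the sum cancels in pairs. For (iii), the map $(a,b) \mapsto (-a,b)$ is a bijection of $I(\lambda)$ onto itself and converts $az+b$ into $a(-z)+b$; thus $f_k(\lambda,-z)=f_k(\lambda,z)$ for all $k$, and in particular for $k$ even as claimed.

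The main content lies in (iv). The key conceptual step is recognizing that partition conjugation corresponds to coordinate transposition on the index set: for $a,b>0$, we have $(a,b)\in I(\lambda)$ if and only if $(b,a)\in I(\overline{\lambda})$, since $\overline{\lambda}_a=\#\{i:\lambda_i\geq a\}$ makes the condition $b\leq\overline{\lambda}_a$ equivalent to $a\leq\lambda_b$. This transposition extends to a bijection $I(\lambda)\to I(\overline{\lambda})$ on the full (four-quadrant) index sets, because the sign-change involutions commute with swapping coordinates. Applying the bijection $(a',b')=(b,a)$ to the defining sum for $f_{2k}(\overline{\lambda},-1/z)$ gives
\[f_{2k}\!\left(\overline{\lambda},-\tfrac{1}{z}\right) = \frac{1}{4}\sum_{(a,b)\in I(\lambda)} \left(-\tfrac{b}{z}+a\right)^{-2k} = \frac{z^{2k}}{4}\sum_{(a,b)\in I(\lambda)} (az-b)^{-2k},\]
and the symmetry $b\mapsto -b$ on $I(\lambda)$ converts $(az-b)^{-2k}$ into $(az+b)^{-2k}$, completing the identity.

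I expect the only subtle point to be the bookkeeping when extending the positive-quadrant bijection $(a,b)\leftrightarrow(b,a)$ to all of $I(\lambda)$; once one checks that the sign involutions act independently on each coordinate and thus commute with the swap, the calculation for (iv) reduces to the few lines above, and Theorem \ref{Thm1} and Corollary \ref{Cor1} then follow by summing over $\lambda \vdash n$.
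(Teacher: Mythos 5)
Your proof is correct and follows essentially the same route as the paper's: both decompose the four-fold lattice via sign changes on the index pairs $(a,b)$ to get (i)--(iii), and both prove (iv) by factoring out $z^{-2k}$ and using the equivalence $a\leq\lambda_b\iff b\leq\overline{\lambda}_a$ to identify the transposed index set with that of the conjugate partition. Your phrasing in terms of explicit bijections on a signed index set is a slightly cleaner bookkeeping device than the paper's four explicit quadrant sums, but the underlying argument is identical.
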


\begin{proof}[Proof of Lemma \ref{Lemma}]
    To establish these properties we start by writing the function $f_k(\lambda,z)$ as defined in \eqref{Eisenstein2} in the following way. Let
    \begin{align*}
        f^{(1)}_k(\lambda,z) &=\frac{1}{4}\sum_{1\leq b\leq r} \sum_{1\leq a\leq \lambda_b} (az+b)^{-k}, \\
        \nonumber f^{(2)}_k(\lambda,z) &=\frac{1}{4}\sum_{1\leq b\leq r} \sum_{1\leq a\leq \lambda_b} (az-b)^{-k}, \\
        \nonumber f^{(3)}_k(\lambda,z) &=\frac{1}{4}\sum_{1\leq b\leq r} \sum_{1\leq a\leq \lambda_b} (-az-b)^{-k}, \\
        \nonumber f^{(4)}_k(\lambda,z) &=\frac{1}{4}\sum_{1\leq b\leq r} \sum_{1\leq a\leq \lambda_b} (-az+b)^{-k},
    \end{align*}
viz. the sums over the four individual quadrants of $\mathscr F(\lambda,z)$, so that
    \begin{align}\label{summands}
    f_k(\lambda,z)&=f^{(1)}_k(\lambda,z)+f^{(2)}_k(\lambda,z)+f^{(3)}_k(\lambda,z)+f^{(4)}_k(\lambda,z)\\
    \nonumber &= \frac{1}{4}\sum_{1\leq b\leq r} \sum_{1\leq a\leq \lambda_b} \left[(az+b)^{-k}+(az-b)^{-k}+(-az-b)^{-k}+(-az+b)^{-k} \right].
    \end{align}
 The claimed properties are easily deduced from the summands on the right-hand side of this expression. If $k=0$, then clearly \begin{equation*} f_0(\lambda,z) = \sum_{1\leq b \leq r}\sum_{1\leq a\leq \lambda_b} 1 = |\lambda|,\end{equation*}
which establishes (i). If $k$ is odd, positive and negative summands cancel: \begin{equation*}(az+b)^{-k}+(az-b)^{-k}-(az+b)^{-k}-(az-b)^{-k}=0,\end{equation*}
which gives (ii). If we replace $z$ with $-z$, the summands become \begin{equation*} (a(-z)+b)^{-2k}+(a(-z)-b)^{-2k}+(-a(-z)-b)^{-2k}+(-a(-z)+b)^{-2k},\end{equation*}
which are identically the summands on the right side of \eqref{summands}, giving (iii).
%
%
%

To prove (iv), note that for even weights $2k\neq 0$, one can write \begin{equation*} f_{2k}(\lambda,z) = \frac{1}{2}\cdot \sum_{1\leq b\leq r}\sum_{1\leq a \leq \lambda_b} \left[(az+b)^{-2k} + (az-b)^{-2k} \right].\end{equation*} Much as with classical Eisenstein series, factoring out $(-z)^{-2k}$ yields
        \begin{align*}
            f_{2k}(\lambda,z)&=(-z)^{-2k}\cdot \frac{1}{2} \cdot \sum_{1\leq b\leq r}\sum_{1\leq a \leq \lambda_b} \left[ (b(-1/z) -a)^{-2k} + (b(-1/z) +a)^{-2k})  \right] \\
          \nonumber  &= (-z)^{-2k}\cdot \frac{1}{2} \cdot \sum_{1\leq a\leq s}\sum_{1\leq b \leq \overline{\lambda}_a} \left[ (a(-1/z)+b)^{-2k} + (a(-1/z)-b)^{-2k})  \right]\\
           \nonumber  &=z^{-2k} f_{2k}(\overline{\lambda},-1/z),
        \end{align*}
        since interchanging coefficients $a,b$ and order of summation in the second equality is equivalent to summing over lattice $\mathscr F (\overline{\lambda},-1/z)$ for the conjugate partition $\overline{\lambda}=(\overline{\lambda}_1, \overline{\lambda}_2, \dots, \overline{\lambda}_s)$.\end{proof}

\begin{remark} An interesting feature of this proof is the explicit interdependence of partition conjugation $\lambda \mapsto \overline{\lambda}$ and complex variable inversion $z\mapsto -1/z$, yielding a duality between $\mathscr F(\lambda,z)$ and $\mathscr F(\overline{\lambda},-1/z)$.\end{remark}

%

        \begin{proof}[Proof of Theorem \ref{Thm1}]
We note the finite double series $g_{k}(n,z)$ defined in \eqref{partEisenstein} is valid for all $z\not\in \mathbb R$ and all $k$. It is clear from Lemma \ref{Lemma} that since the $f_k$ all vanish if $k$ is odd, then $g_k(n,\lambda)=0$ as well. It is also clear that the evenness of the $f_{2k}$ in the $z$-aspect yields evenness of $g_{2k}(n,z)$ in $z$. Observing that the partitions of $n$ represent identically the same set as the conjugate partitions of $n$, it follows from \eqref{partEisenstein} together with Lemma \ref{Lemma} that
\begin{equation*}
g_{2k}(n, -1/z)=z^{2k}\sum_{\lambda \vdash n} f_{2k}(\overline{\lambda}, z)=z^{2k}\sum_{\lambda \vdash n} f_{2k}(\lambda, z)=z^{2k}g_{2k}(n, z),
\end{equation*}
which completes the proof of the theorem.      \end{proof}

        \begin{proof}[Proof of Corollary \ref{Cor1}]
            Note from the geometric visualization of the Ferrers-Young lattice in $\mathbb C$, that for any $\lambda$ we have $|1+z| \leq |\omega|<|\lambda|\cdot |1+z|$ for every $\omega\in\mathscr F(\lambda,z) $. Then for $n=|\lambda|, \  z\not\in \{-1,0\}$, when $k\geq 0$ we have that $|f_{k}(\lambda,z)| \leq n  |1+z|^{-k}$, thus $|g_{k}(n,z)|\leq n\cdot p(n)|1+z|^{-k}$ with $p(n)$ the classical partition function (number of partitions of $n$). Therefore, for $k\geq0$ the convergence of $\mathscr G_{k}(z,q)$ as defined in \eqref{partEisenstein2} follows from the well-known convergence of the series \begin{equation*}\sum_{n\geq 1} n \cdot p(n)q^n\end{equation*} for $|q|<1$ (see \cite{Andrews}). By a similar argument, for $k<0$ let $k':=-k>0$; then we have that $|f_{k}(\lambda,z)| < n^{k'+1} |1+z|^{k'}$, thus $|g_{k}(n,z)|< n^{k'+1} p(n)|1+z|^{k'}$ and convergence follows by comparison with $\sum_{n\geq 1} n^{k'+1} p(n)q^n<\infty$ for $|q|<1$.

By Theorem \ref{Thm1}, that the $g_{k}$ vanish when $k$ is odd yields the vanishing of $\mathscr G_{k}$; and the evenness of the $g_{2k}$ in the $z$-aspect induces evenness of $\mathscr G_{2k}(z)$ as well.

Finally, for the functional equation in $z$, it suffices to note by Theorem \ref{Thm1} that
\begin{flalign*}
\mathscr{G}_{2k}(-1/z)=  \sum_{n\geq 1} g_{2k}(n, -1/z)q^n  =  z^{2k}\sum_{n\geq 1} g_{2k}(n, z)q^n=z^{2k}\mathscr{G}_{2k}(z).
\end{flalign*}\end{proof}
%
\section{Further questions}
                Classical modular forms enjoy many nice interrelations, and structural properties such as being finitely generated (see \cite{Ono_web}), viz. any holomorphic integer-weight modular form can be expressed as a combination of the classical Eisenstein series $G_4(\tau), G_6(\tau)$. Our definition of semi-modular forms was algebraically inspired by ideas about generators of $GL_2(\mathbb Z)$, without looking in this study for analogues of growth conditions, being finitely-generated or other interesting properties of modular forms. 
                
                Then one wonders: Do the partition Eisenstein series form a basis for a suitably defined space of functions? 
                Are there further connections to modular forms theory, such as links to quasi-modular forms or other generalized modular forms like Hilbert modular forms?
            Finally, considering the uniquely combinatorial-analytic constructions in Section \ref{Sect2whole}, might other families of semi-modular forms exist that involve ``complementary'' transformation matrices {\it different} from $R$ above, 
arising from other types of naturally-occurring mathematical structures --- either within the set of partitions or from elsewhere in mathematics? 

    \section*{Acknowledgments}

The authors are grateful to Agbolade Patrick Akande, Marie Jameson, David McGady, Ken Ono, Paul Pollack,  Larry Rolen, A. V. Sills and Ian Wagner for conversations that informed our study. We are also very thankful to the anonymous referee for comments that strengthened this work. The first author was partially supported by the Research and Training Group grant DMS-1344994 funded by the National Science Foundation.

\end{document}